\title{\LARGE \bf
Exploiting Over-Approximation Errors as Preview Information for Nonlinear Control*
}
\author{Antoine Aspeel$^{1}$, Antoine Girard$^{1}$, and Thiago Alves Lima$^{2}$
\thanks{*
A.G. received support from the RTE-CentraleSup\'elec chair. T.A.L. was supported by CNPq (Conselho Nacional de Desenvolvimento Cient\'ifico e Tecnol\'{o}gico) via the grant number 443674/2024-8.}
\thanks{$^{1}$Universit{\'e} Paris-Saclay, CNRS, CentraleSup{\'e}lec, Laboratoire des signaux et syst{\`e}mes, 91190, Gif-sur-Yvette, France.
{\tt\small firstname.lastname@centralesupelec.fr}}%
\thanks{$^{2}$Systems Engineering Division, Aeronautics Institute of Technology (ITA), Fortaleza, 60.415-513, CE, Brazil.
{\tt\small thiago.lima@gp.ita.br}}%
}
\begin{document}

\maketitle
\thispagestyle{empty}
\pagestyle{empty}


\begin{abstract}
We study the control of nonlinear constrained systems via over-approximations. Our key observation is that the over-approximation error, rather than being an unknown disturbance, can be exploited as input-dependent preview information. This leads to the notion of informed policies, which depend on both the state and the error. We formulate the concretization problem---recovering a valid input for the true system from a preview-based policy---as a fixed-point equation. Existence of solutions follows from the Brouwer fixed-point theorem, while efficient computation is enabled through closed-form, linear, or convex programs for input-affine systems, and through an iterative method based on the Banach fixed-point theorem for nonlinear systems.
\end{abstract}

\section{INTRODUCTION}

The control of nonlinear systems subject to state and input constraints is a fundamental challenge in systems and control. Constraints arise naturally in practice from actuator limits in aerospace systems, to safety envelopes in autonomous driving and robotics, to operating ranges in energy systems. Designing controllers that enforce such constraints while achieving the desired behavior is central to the deployment of control systems in safety-critical domains.

While linear models and unconstrained settings have led to elegant and powerful theory, real-world systems are often nonlinear and must operate within hard bounds. The main challenges lie in ensuring constraint satisfaction without excessive conservatism, and in achieving scalability to systems of realistic complexity. These difficulties motivate the development of new methods that can provide tractable and reliable control laws for constrained nonlinear dynamics.

\paragraph{Related work}
Abstraction methods replace a nonlinear system by an abstract one that is easier to analyze and control. Two main cases can be distinguished. The first is symbolic control, where a finite-state abstraction is obtained through discretization of the state and input spaces, enabling controller synthesis with formal guarantees using automata- or game-theoretic techniques \cite{reissig2016feedback, belta2017formal, liu2014abstraction}. The second is based on over-approximations of the dynamics, typically by linear or piecewise affine systems (also called hybridizations) \cite{althoff2008reachability, girard2011synthesis}. These models make it possible to apply tractable synthesis and verification methods while ensuring safety through conservative error bounds. Moreover, some recent approaches combine symbolic control and over-approximation techniques to benefit from the advantages of both frameworks \cite{calbert2024smart}. In all these cases, the approximation error is treated as an unknown adversarial disturbance, which may introduce significant conservatism. In contrast, the present work leverages the structure of the over-approximation error by interpreting it as input-dependent preview information that can be exploited by the control policy.

Control with preview information refers to control strategies that exploit knowledge of future exogenous signals, such as reference trajectories or disturbances, to improve closed-loop performance (see \cite{birla2015optimal} and references therein). Preview has also been used in safety-critical control problems, where knowledge of future disturbances allows to enlarge the robust controlled invariant set \cite{liu2024quantifying}. The key difference with the present work is that, in existing formulations, the preview signal is exogenous and independent of the control input, whereas here it arises endogenously from the over-approximation error and depends explicitly on the chosen input.

\paragraph{Contributions}

We show that over-approximations naturally provide \emph{input-dependent preview information}. This motivates policies that depend not only on the state but also on the over-approximation error, in contrast to classical approaches that treat the error as an adversarial disturbance.

We formalize the \emph{concretization problem} ---recovering a valid input for the true system from a preview-based policy--- as a fixed-point equation that captures the mutual dependence between the input and the over-approximation error. Using the Brouwer fixed-point theorem, we establish a general existence result.

For input-affine systems, we show that concretization can be performed efficiently: in closed form when the true and approximate dynamics share the same input matrix, or via a convex (or even linear) program when the input matrix differs.

For nonlinear systems, we establish contraction conditions that allow effective iterative concretization via the Banach fixed-point theorem.

\paragraph{Notation}
For two vectors $x$ and $y$, their vertical concatenation is denoted $(x,y)$, and we use the notation $x_{i:j}\coloneqq (x_i,\dots, x_j)$. For a matrix $M$, $M_{i,:}$ is its $i$-th row, and $M_{i:j,:}$ is the submatrix of $M$ formed by the rows $\{k\mid i\leq k\leq j\}$ (similarly for columns). $\|\cdot\|$ denotes a vector norm when applied to a vector, and the corresponding induced norm when applied to a matrix. For a function $f$, its Lipschitz constant is denoted $L_f$.

\section{OVER-APPROXIMATIONS}

We are interested in known nonlinear and deterministic dynamical systems under state and input constraints, i.e.,
\begin{equation}\label{eq:nonlinearSystem}
x_{t+1}=f(x_t,u_t),\ \ \ (x_t,u_t)\in\X\times\U,
\end{equation}
where $f:\X\times\U\rightarrow\R^{n_x}$, $\X\subseteq\R^{n_x}$ and $\U\subseteq\R^{n_u}$.

Since the dynamics $f$ is nonlinear, such a system is hard to control. For that reason, it will be approximated by a simpler (e.g., linear) dynamics $\hat{f}:\X\times\U\rightarrow\R^{n_x}$ for which synthesizing a policy is easier. However, such an approximation introduces an error $e$, namely:
\begin{align}\label{eq:error}
e(x,u)=f(x,u)-\hat{f}(x,u).
\end{align}
Let $\mathcal{E}\subseteq\R^{n_x}$ be a set that over-approximates the error over the domain, i.e.,
\begin{align}\label{eq:errorInclusion}
\text{ for all } (x,u)\in\X\times\U:\ e(x,u)\in\E,
\end{align}
and consider the dynamics $F:\X\times\U\times\E\rightarrow\R^{n_x}$ defined by
$$
F(x,u,\bar{e})=\hat{f}(x,u)+\bar{e},
$$
where $\bar{e}\in\E$ is a disturbance. The function $F$ is called an \emph{over-approximation of $f$} because it satisfies
$$
f(x,u)\in\{F(x,u,\bar{e})\mid\bar{e}\in\E\}
$$
for all $(x,u)\in\X\times\U$.

The disturbance $\bar{e}$ is usually treated as unknown \cite{althoff2008reachability, girard2011synthesis}, and a state-feedback policy $\pi:\X\rightarrow\U$ is designed to complete a control task for any realization of the disturbances. In contrast, in the next section, we show why the disturbance can be considered as known at runtime, and how it can be exploited by the policy.

\section{INFORMED POLICIES}\label{sec:IP}

By construction, the error incurred in state $x$ under input $u$ is $\bar{e}=e(x,u)$. At runtime, $x$ is known and $u$ is a decision variable. Since the error function $e$ is known, $\bar{e}$ is a deterministic quantity that depends solely on $u$. Thus, $\bar{e}$ can be interpreted as \emph{input-dependent preview information}. Consequently, rather than synthesizing a feedback policy that depends only on the state, one can design a policy that depends jointly on the state and the error, i.e.,
$$
\pi:\X\times\E\rightarrow\U.
$$
We call such a policy an \emph{informed policy}, and a policy whose domain is $\X$ will be called \emph{uninformed}.

The use of a preview-based policy leads to an interplay between the policy $\pi$ and the over-approximation error $e$ as described in Fig.~\ref{fig:preview_diagram}. It follows that at runtime, once $x\in\X$ is available, the input $u$ must be chosen to satisfy $u=\pi(x,e(x,u))$. More explicitly, concretizing an informed policy requires to solve the following problem:
\begin{problem}[Concretization]\label{prob:concretization}
Find $u\in\U$ such that
$$
u=\pi(x,f(x,u)-\hat{f}(x,u)).
$$
\end{problem}
If the policy $\pi(x,e)$ is constant with respect to its second argument, i.e., it does not depend on the error, then Problem~\ref{prob:concretization} necessarily has a solution. This shows that informed policies generalize uninformed policies.

\begin{figure}
\centering
\begin{tikzpicture}[scale=0.5, font=\small, >=Stealth]
  \def\yshift{15}

  \coordinate (A) at (-6,0);
  \coordinate (B) at (6,0);

  \node (NodeA) at (A) [left, align=center] {Input set \\ $\U$};
  \node (NodeB) at (B) [right, align=center] {Error set \\ $\E$};

  \draw[->, thick] ([yshift=\yshift]A) to[bend left=10] 
    node[midway, above, align=center] {Disturbance as an input-dependent preview information \\ $\bar{e}_t=f(x_t,u_t)-\hat{f}(x_t,u_t)$} ([yshift=\yshift]B);

  \draw[->, thick] ([yshift=-\yshift]B) to[bend left=10] 
    node[midway, below, align=center] {Input from a preview-based policy \\ $u_t=\pi(x_t,\bar{e}_t)$} ([yshift=-\yshift]A);
\end{tikzpicture}
\caption{Illustration of the interplay between the informed policy and the approximation error.}
\label{fig:preview_diagram}
\end{figure}

The next theorem states that informed policies concretized by solving Problem~\ref{prob:concretization} lead to over-approximations of $f$.
\begin{theorem} \label{thm:overapproximation}
Let $f,\hat{f}:\X\times\U\rightarrow\R^{n_x}$ and $\pi:\X\times\E\rightarrow\U$ be three functions, and assume that $\E$ satisfies \eqref{eq:errorInclusion}. For any $x\in\X$, if Problem~\ref{prob:concretization} has a solution $u$, then
$$
f(x,u)\in \{ F(x,\pi(x,\bar{e}),\bar{e}) \mid \bar{e}\in\E \}.
$$
\end{theorem}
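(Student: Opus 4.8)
The plan is to exhibit an explicit witness for the claimed set membership, namely the actual over-approximation error incurred at the concretized input. Suppose $u\in\U$ solves Problem~\ref{prob:concretization} for the given $x\in\X$, and set $\bar{e}\coloneqq f(x,u)-\hat{f}(x,u)$. The argument then reduces to two verifications: that $\bar{e}$ is an admissible disturbance, i.e. $\bar{e}\in\E$, and that substituting $\bar{e}$ into $F(x,\pi(x,\cdot),\cdot)$ reproduces $f(x,u)$.

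For admissibility, note that a solution of Problem~\ref{prob:concretization} is by definition an element of $\U$, and $x$ is assumed to lie in $\X$, so $(x,u)\in\X\times\U$. Hence the over-approximation hypothesis \eqref{eq:errorInclusion} applies and gives $e(x,u)=\bar{e}\in\E$. For the second point, the defining equation of Problem~\ref{prob:concretization} states exactly that $u=\pi(x,f(x,u)-\hat{f}(x,u))=\pi(x,\bar{e})$; combining this with the definition $F(x,u',\bar{e})=\hat{f}(x,u')+\bar{e}$ yields $F(x,\pi(x,\bar{e}),\bar{e})=\hat{f}(x,u)+\bar{e}=\hat{f}(x,u)+\bigl(f(x,u)-\hat{f}(x,u)\bigr)=f(x,u)$. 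Therefore $f(x,u)$ belongs to $\{F(x,\pi(x,\bar{e}),\bar{e})\mid\bar{e}\in\E\}$, which is the desired conclusion.

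There is essentially no hard step: the proof is a direct unwinding of the definitions of $e$ in \eqref{eq:error}, of $F$, of the concretization equation, and of the over-approximation property \eqref{eq:errorInclusion}, tied together by the single choice $\bar{e}=e(x,u)$. The only point requiring a moment's care is ensuring that \eqref{eq:errorInclusion} is invoked with a pair genuinely in $\X\times\U$, which is automatic here; it is also worth remarking that no regularity of $f$, $\hat{f}$, or $\pi$ is needed for this particular statement, in contrast to the later existence results that rely on fixed-point theorems.
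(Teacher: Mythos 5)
Your proof is correct and takes essentially the same approach as the paper: both proofs exhibit the witness $\bar{e}=f(x,u)-\hat{f}(x,u)$, check that it lies in $\E$ via \eqref{eq:errorInclusion}, use the concretization equation to get $u=\pi(x,\bar{e})$, and then unfold the definition of $F$ to recover $f(x,u)$.
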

\begin{proof}
Consider $\bar{e} \coloneq f(x,u)-\hat{f}(x,u)$ and note that $\bar{e}\in\E$ thanks to \eqref{eq:errorInclusion}. In addition, since $u$ is a solution to Problem~\ref{prob:concretization}, $u=\pi(x,\bar{e})$. Finally, one can write
\begin{align*}
f(x,u) &= \hat{f}(x,u)+f(x,u)-\hat{f}(x,u) \\
&=\hat{f}(x,\pi(x,\bar{e}))+\bar{e} \\
&= F(x,\pi(x,\bar{e}),\bar{e}),
\end{align*}
which concludes the proof.
\end{proof}

To concretize an informed policy $\pi$ at runtime, it must be chosen such that (i) Problem~\ref{prob:concretization} has a solution, and (ii) a solution can be found efficiently. Overall, solving a control task for $f$ can be done through the following steps:
\begin{enumerate}[1.]
\item Find $\hat{f}$ and $\E$ such that $F$ is an over-approximation of $f$, i.e., inclusion \eqref{eq:errorInclusion} holds. \label{item:overapprox}
\item Find a set $\Pi$ of informed policies $\pi:\X\times\E\rightarrow\U$ such that for any $x\in\X$, Problem~\ref{prob:concretization} has a solution that can be found efficiently at runtime.

\item Find an informed policy $\pi\in\Pi$ such that the closed-loop system
$$
x_{t+1}=F(x_t,\pi(x_t,\bar{e}_t),\bar{e}_t)
$$
accomplishes the control task of interest for any $\bar{e}_t\in\E$.
\item At runtime, once the state $x\in\X$ is available, find a control input by solving Problem~\ref{prob:concretization}.
\end{enumerate}
Computing over-approximations (step~1.) and synthesizing informed policies (step~3.) are two well-studied problems discussed in Sections~\ref{sec:computingOA} and \ref{sec:synthesizingIP}, respectively. In the remainder of this section, we investigate the existence of solutions to Problem~\ref{prob:concretization} and derive sufficient conditions that enable it to be solved efficiently (step~2.).

\subsection{Concretization as a fixed point}\label{sec:IPconcretization}
For a function $g:X\rightarrow X$, a \emph{fixed point of $g$} is a point $x\in X$ such that $x=g(x)$. Such an equation is referred to as a \emph{fixed-point equation}. We observe that for any $x\in\X$, an input $u$ is a solution to Problem~\ref{prob:concretization} if and only if it is a fixed point of the function $\mathcal{F}_x:\U\rightarrow\U$ defined by
\begin{align}\label{eq:FPoperator}
u\mapsto \pi(x,f(x,u)-\hat{f}(x,u)).
\end{align}
This observation allows us to exploit several results from fixed-point theory to study Problem~\ref{prob:concretization}.

First, let us recall Brouwer fixed-point theorem \cite{brouwer1911abbildung} which establishes the existence of a fixed point under mild conditions (see, e.g., \cite[Corollary~6.6]{border1985fixed} for a comprehensive presentation).

\begin{lemma}[Brouwer fixed-point theorem]\label{thm:brouwer}
Let $X\subseteq\R^n$ be a non-empty, compact and convex set, and let $g:X\rightarrow X$ be a function. If $g$ is continuous, then it admits a fixed point.
\end{lemma}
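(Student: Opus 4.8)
The final statement is the classical Brouwer fixed-point theorem, so in a control paper the honest ``proof'' is simply a pointer to the literature: \cite[Corollary~6.6]{border1985fixed} records exactly this formulation, with the original reference being \cite{brouwer1911abbildung}. If one nonetheless wanted a self-contained argument, I would assemble it in two stages. The first stage reduces the general statement to the special case $X=B$, a closed Euclidean ball. Since $X$ is non-empty, compact and convex, it sits inside some closed ball $B\subseteq\R^n$, and the nearest-point (metric) projection $r:B\rightarrow X$ is well defined and $1$-Lipschitz, using convexity for uniqueness of the closest point. Then $g\circ r:B\rightarrow B$ is continuous, its image lies in $X$, and $r$ restricts to the identity on $X$; hence any fixed point of $g\circ r$ lies in $X$ and is a fixed point of $g$. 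Because all closed balls are homeomorphic to $n$-simplices and $n$-cubes, in the second stage I may work with whichever of these shapes is most convenient.

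The second stage is the topological core: the no-retraction theorem, stating that there is no continuous map $B\rightarrow\partial B$ fixing $\partial B$ pointwise. Granting it, suppose $g:B\rightarrow B$ has no fixed point; then for every $x$ the points $x$ and $g(x)$ are distinct, so the ray emanating from $g(x)$ through $x$ meets $\partial B$ in a unique point $\rho(x)$, and one checks that $\rho:B\rightarrow\partial B$ is continuous and equals the identity on $\partial B$, a contradiction. To prove the no-retraction theorem, the most elementary path is combinatorial: triangulate the simplex, apply Sperner's lemma to obtain for each mesh size a completely labeled sub-simplex, and pass to a subsequential limit of its vertices, where the labeling constraints force a fixed point directly. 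An analytic alternative mollifies a hypothetical retraction to a $C^1$ map, selects a regular value by Sard's theorem, and derives a contradiction from a Stokes'/degree computation.

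The projection reduction and the ray construction are routine; the genuine obstacle---the only step where topology or combinatorics is unavoidable---is the no-retraction theorem, equivalently Sperner's lemma. Since this machinery is standard and outside the scope of the paper, I would, like the authors, simply cite it rather than reproduce it.
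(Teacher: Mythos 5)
Your proposal matches the paper's treatment exactly: the authors state Brouwer's theorem as a lemma and cite \cite{brouwer1911abbildung} and \cite[Corollary~6.6]{border1985fixed} without proof, which is precisely what you conclude is the honest course. The additional sketch you give (projection onto a ball, no-retraction theorem, Sperner's lemma) is a correct outline of the standard argument but is not reproduced in the paper.
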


The following theorem is a direct application of Brouwer fixed-point theorem to the function $\mathcal{F}_x$ defined by \eqref{eq:FPoperator}. It establishes mild sufficient conditions for the existence of a solution to Problem~\ref{prob:concretization}.

\begin{theorem}\label{thm:existence}
Let $f,\hat{f}:\X\times\U\rightarrow\R^{n_x}$ and $\pi:\X\times\E\rightarrow\U$ be three functions continuous with respect to their second argument, and assume that $\E$ satisfies \eqref{eq:errorInclusion}. If the set $\U$ is non-empty, compact and convex, then Problem~\ref{prob:concretization} has a solution for all $x\in\X$.
\end{theorem}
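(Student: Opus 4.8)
The plan is to invoke Brouwer's fixed-point theorem (Lemma~\ref{thm:brouwer}) for the self-map $\mathcal{F}_x:\U\to\U$ defined in \eqref{eq:FPoperator}, using the observation already recorded above that, for a fixed $x\in\X$, an input $u$ solves Problem~\ref{prob:concretization} if and only if it is a fixed point of $\mathcal{F}_x$. Since $\U$ is assumed non-empty, compact and convex, it remains only to check that, for each $x\in\X$, the map $\mathcal{F}_x$ is well-defined (it indeed takes values in $\U$) and continuous on $\U$.

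Well-definedness is immediate: for any $u\in\U$, inclusion \eqref{eq:errorInclusion} guarantees that $e(x,u)=f(x,u)-\hat{f}(x,u)\in\E$, so $\pi(x,e(x,u))$ is defined, and $\pi$ takes values in $\U$ by hypothesis; hence $\mathcal{F}_x(u)\in\U$.

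For continuity, I would write $\mathcal{F}_x$ as the composition $u\mapsto e(x,u)\mapsto\pi(x,e(x,u))$. With $x$ frozen, the maps $u\mapsto f(x,u)$ and $u\mapsto\hat{f}(x,u)$ are continuous because $f$ and $\hat{f}$ are continuous with respect to their second argument; hence their difference $u\mapsto e(x,u)$ is continuous from $\U$ into $\R^{n_x}$, and by \eqref{eq:errorInclusion} it can be regarded as a continuous map from $\U$ into $\E$ equipped with the subspace topology. Composing with $\bar{e}\mapsto\pi(x,\bar{e})$, which is continuous on $\E$ by hypothesis, yields continuity of $\mathcal{F}_x$. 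Lemma~\ref{thm:brouwer} then provides a fixed point $u\in\U$ of $\mathcal{F}_x$, i.e., a solution of Problem~\ref{prob:concretization}; since $x\in\X$ was arbitrary, the conclusion holds for all $x\in\X$.

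I do not expect a genuine obstacle here: the statement is a direct corollary of Brouwer's theorem once the fixed-point reformulation is in place. The only points that deserve a line of care are (i) ensuring the composition defining $\mathcal{F}_x$ is legitimate, which is precisely where \eqref{eq:errorInclusion} enters, and (ii) observing that continuity of $\pi$ in its second argument alone suffices, since $x$ is held fixed and no joint continuity or regularity in $x$ is invoked.
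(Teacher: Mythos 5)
Your proof is correct and follows exactly the paper's argument: recast Problem~\ref{prob:concretization} as the fixed-point equation for $\mathcal{F}_x$, verify that $\mathcal{F}_x$ is a continuous self-map of the non-empty compact convex set $\U$, and invoke Brouwer's theorem. You merely spell out the well-definedness and the continuity of the composition in a bit more detail than the paper does.
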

\begin{proof}
It follows from the continuity assumptions over $f$, $\hat{f}$ and $\pi$ that the function $\mathcal{F}_x:\U\rightarrow\U$ defined by \eqref{eq:FPoperator} is continuous for all $x\in\X$. Since $\U$ is non-empty, compact and convex, it follows from Lemma~\ref{thm:brouwer} that for all $x\in\X$, $\mathcal{F}_x$ has a fixed point $u\in\U$. This fixed point satisfies $u=\pi(x,f(x,u)-\hat{f}(x,u))$ and is therefore a solution to Problem~\ref{prob:concretization}.
\end{proof}

\begin{remark}\label{rem:existence}
If the set $\U$ does not satisfy the convexity or compactness assumption, but that $\E$ does, the conclusion of Theorem~\ref{thm:existence} remains. Indeed, Brouwer fixed-point theorem can be applied to the following fixed-point equation in the variable $\bar{e}\in \E$:
$$
\bar{e}=f(x,\pi(x,\bar{e}))-\hat{f}(x,\pi(x,\bar{e})),
$$
where every fixed point $\bar{e}$ yields a solution $u\coloneqq\pi(x,\bar{e})$ to Problem~\ref{prob:concretization}. Note that the set $\E$ is chosen when constructing an over-approximation, and can always be taken convex.

Finally, the convexity of the set $\U$ (or $\E$) can be weakened to requiring the set to be homeomorphic to a convex set. Further generalizations can be derived using Lefschetz fixed-point theorem~\cite{lefschetz1926intersections}.
\end{remark}

Since Brouwer fixed-point theorem is not constructive, Theorem~\ref{thm:existence} suffers from the same drawback: it guarantees the existence of a solution to Problem~\ref{prob:concretization} but does not provide a way to compute it. In what follows, we consider different cases where Problem~\ref{prob:concretization} can be solved efficiently.

\subsection{Systems with shared input dependency}\label{sec:sharedInputMatrix}

In this section, we consider dynamics and over-approximations that share the same input dependency
\begin{subequations}\label{eq:constantB}
\begin{align}
f(x,u)&=f_\x(x)+f_{\x\u}(x,u),\\
\hat{f}(x,u)&=\hat{f}_\x(x)+f_{\x\u}(x,u).
\end{align}
\end{subequations}
An important special case is when the shared part of the dynamics is linear, and the over-approximation is affine, i.e.,
$$
f(x,u)=f_\x(x)+Bu,\quad \hat{f}(x,u)=Ax+a+Bu,
$$
which makes the over-approximation easier to control.

In the case \eqref{eq:constantB}, the error between the two systems is $e(x,u)=f_\x(x)-\hat{f}_\x(x)$, which is independent of $u$. Consequently, Problem~\ref{prob:concretization} simplifies to an explicit policy evaluation:
\begin{align}\label{eq:concretization:constantB}
u=\pi(x,f_\x(x)-\hat{f}_\x(x)),
\end{align}
which has a solution for any policy $\pi:\X\times \E\rightarrow \U$ (even if $\pi$ is discontinuous). This is summarized in the following theorem.

\begin{theorem}\label{thm:constant}
Let $f,\hat{f}:\X\times\U\rightarrow\R^{n_x}$ and $\pi:\X\times\E\rightarrow\U$ be three functions, and assume that $\E$ satisfies \eqref{eq:errorInclusion}. If $f$ and $\hat{f}$ are given by \eqref{eq:constantB}, then for any $x\in\X$, Problem~\ref{prob:concretization} has a unique solution given by \eqref{eq:concretization:constantB}.
\end{theorem}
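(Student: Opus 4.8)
The plan is to verify directly that under hypothesis \eqref{eq:constantB}, the fixed-point operator $\mathcal{F}_x$ of \eqref{eq:FPoperator} collapses to a constant map, so that its unique fixed point is precisely the value given by \eqref{eq:concretization:constantB}. First I would substitute the decompositions \eqref{eq:constantB} into the error function \eqref{eq:error}: since the term $f_{\x\u}(x,u)$ is common to both $f$ and $\hat{f}$, it cancels, leaving $e(x,u) = f(x,u)-\hat{f}(x,u) = f_\x(x)-\hat{f}_\x(x)$, which is manifestly independent of $u$. Write $\bar e_x \coloneqq f_\x(x)-\hat f_\x(x)$ for brevity.

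Next I would rewrite the concretization equation of Problem~\ref{prob:concretization} using this simplification: the condition $u = \pi(x, f(x,u)-\hat{f}(x,u))$ becomes $u = \pi(x, \bar e_x)$, which is exactly \eqref{eq:concretization:constantB}. Note $\bar e_x\in\E$ by \eqref{eq:errorInclusion}, so $\pi(x,\bar e_x)$ is well-defined and lies in $\U$. The right-hand side no longer contains $u$, so the equation is not really a fixed-point equation anymore but an explicit assignment: any $u\in\U$ solving Problem~\ref{prob:concretization} must equal $\pi(x,\bar e_x)$ (uniqueness), and conversely $u\coloneqq\pi(x,\bar e_x)$ does solve it, since plugging it back in gives $\pi\big(x, f(x,\pi(x,\bar e_x))-\hat f(x,\pi(x,\bar e_x))\big)=\pi(x,\bar e_x)=u$ because the error is independent of the second slot. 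This establishes both existence and uniqueness.

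There is essentially no obstacle here: the result is an immediate consequence of the cancellation built into \eqref{eq:constantB}, and in particular it does not require continuity of $\pi$, nor any topological hypothesis on $\U$ (contrast with Theorem~\ref{thm:existence}). The only point worth stating carefully is that $\bar e_x\in\E$ so that the evaluation $\pi(x,\bar e_x)$ makes sense, which is handled by \eqref{eq:errorInclusion}. I would keep the proof to three or four lines, emphasizing the cancellation step and the equivalence ``$u$ solves Problem~\ref{prob:concretization}'' $\iff$ ``$u=\pi(x,\bar e_x)$''.
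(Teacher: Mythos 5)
Your proof is correct and follows essentially the same argument the paper gives in the paragraph preceding Theorem~\ref{thm:constant}: the shared term $f_{\x\u}(x,u)$ cancels in the error, so $e(x,u)=f_\x(x)-\hat f_\x(x)$ is independent of $u$, and the fixed-point equation degenerates to the explicit assignment \eqref{eq:concretization:constantB}, yielding existence and uniqueness immediately. Your additional remarks --- that $\bar e_x\in\E$ by \eqref{eq:errorInclusion} so the evaluation is well-defined, and that no continuity or topological hypotheses are needed --- are accurate and match the paper's observation that the result holds ``even if $\pi$ is discontinuous.''
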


\subsection{Input affine systems}

In this section, we consider input-affine dynamics and over-approximations, i.e.,
\begin{subequations}\label{eq:affine}
\begin{align}\label{eq:inputAffine}
f(x,u)=f_\x(x)+f_\u(x)u,\ \hat{f}(x,u)=\hat{f}_\x(x)+\hat{f}_\u(x)u.
\end{align}
In addition, we consider informed policies that are error-affine, i.e.,
\begin{align}\label{eq:errorAffine}
\pi(x,e)=\pi_\x(x)+\pi_\e(x)e.
\end{align}
\end{subequations}
Under these assumptions, Problem~\ref{prob:concretization} reduces to:
\begin{align}\label{eq:concretization:affine}
\text{Find}\ u\in\U \text{ such that }
u=\pi_\x(x)&\\
+\pi_\e(x) \Big(f_\x(x) - \hat{f}_\x(x) +& \big(f_\u(x)-\hat{f}_\u(x)\big)u\Big). \notag
\end{align}
Since the constraint in \eqref{eq:concretization:affine} is linear in $u$, \eqref{eq:concretization:affine} is a convex program if $\U$ is convex, and a linear program if $\U$ is a polytope. This is summarized in the following theorem.

\begin{theorem}
Let $f,\hat{f}:\X\times\U\rightarrow\R^{n_x}$ and $\pi:\X\times\E\rightarrow\U$ be three functions, and assume that $\E$ satisfies \eqref{eq:errorInclusion}. If $f,\hat{f}$ and $\pi$ are given by \eqref{eq:affine}, and if the set $\U$ is non-empty, compact and convex, then for any $x\in\X$, Problem~\ref{prob:concretization} has a solution that can be obtained by solving \eqref{eq:concretization:affine}, which is a convex program. If in addition, $\U$ is a polytope, then \eqref{eq:concretization:affine} is a linear program. 
\end{theorem}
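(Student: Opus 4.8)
The plan is to observe that, under the structural assumptions \eqref{eq:affine}, the fixed-point condition in Problem~\ref{prob:concretization} becomes affine in $u$, so that \eqref{eq:concretization:affine} is literally a restatement of the problem; existence then comes for free from Theorem~\ref{thm:existence}, and the convex/linear classification is read off from the shape of the constraints.

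First I would substitute the input-affine forms \eqref{eq:inputAffine} into the error \eqref{eq:error}, obtaining
$$
f(x,u)-\hat f(x,u) = \big(f_\x(x)-\hat f_\x(x)\big) + \big(f_\u(x)-\hat f_\u(x)\big)u ,
$$
and then insert this into the error-affine policy \eqref{eq:errorAffine}. This turns the condition $u=\pi(x,f(x,u)-\hat f(x,u))$ of Problem~\ref{prob:concretization} into exactly the condition displayed in \eqref{eq:concretization:affine}; the inclusion \eqref{eq:errorInclusion} is what guarantees the argument of $\pi$ stays in $\E$, so the substitution is legitimate. Hence, for every $x\in\X$, $u\in\U$ solves Problem~\ref{prob:concretization} if and only if it is feasible for \eqref{eq:concretization:affine}.

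Next I would invoke Theorem~\ref{thm:existence} for existence: since $f$ and $\hat f$ are affine ---hence continuous--- in their second argument, $\pi$ is affine ---hence continuous--- in its second argument, and $\U$ is non-empty, compact and convex, all of its hypotheses hold, so \eqref{eq:concretization:affine} is feasible for every $x\in\X$. (Equivalently, one could apply Lemma~\ref{thm:brouwer} directly to the affine self-map of $\U$ given by the right-hand side of \eqref{eq:concretization:affine}.) Finally, for the classification, I would rewrite \eqref{eq:concretization:affine} as the search for $u\in\U$ satisfying the linear equality
$$
\big(I-\pi_\e(x)\big(f_\u(x)-\hat f_\u(x)\big)\big)\,u = \pi_\x(x)+\pi_\e(x)\big(f_\x(x)-\hat f_\x(x)\big),
$$
whose solution set is an affine subspace of $\R^{n_u}$; intersecting it with the convex set $\U$ yields a convex feasible set, so \eqref{eq:concretization:affine} ---e.g.\ cast as minimizing a constant objective over that set--- is a convex program, and if $\U$ is a polytope the constraint $u\in\U$ reduces to finitely many linear inequalities, making it a linear program.

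I do not expect any genuine obstacle: the result is essentially a corollary of Theorem~\ref{thm:existence} together with the elementary fact that composing affine-in-$u$ dynamics with an affine-in-$e$ policy yields an affine-in-$u$ constraint. The only point requiring a little care is that ``solving \eqref{eq:concretization:affine}'' is a feasibility question ---the affine subspace above need not meet $\U$ a priori--- and it is exactly the existence step (Brouwer, via Theorem~\ref{thm:existence}) that rules this out, so that the convex, or linear, program is guaranteed to be feasible.
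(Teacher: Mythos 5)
Your proof is correct and follows essentially the same route as the paper: Problem~\ref{prob:concretization} is rewritten under \eqref{eq:affine} as the affine-in-$u$ constraint \eqref{eq:concretization:affine}, existence is obtained by invoking Theorem~\ref{thm:existence} (using that affine dependence implies continuity in the second argument), and convexity/linearity are read off from the linear equality constraint together with the convex/polytopic nature of $\U$. You merely spell out the last step, which the paper dismisses as ``straightforward.''
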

\begin{proof}
The existence of a solution to \eqref{eq:concretization:affine} follows from Theorem~\ref{thm:existence}. The convexity and linearity of the optimization program are straightforward.
\end{proof}

\subsection{Nonlinear systems}
In this section, we consider an arbitrary nonlinear system $f$ and an arbitrary over-approximation $\hat{f}$.

Banach fixed-point theorem \cite{banach1922operations} establishes an iterative method to solve a fixed-point equation. This theorem is recalled here.

\begin{lemma}[Banach fixed-point theorem]\label{thm:banach}
Let $X$ be a non-empty and complete space, and let $g:X\rightarrow X$ be a function. If $g$ is a contraction, i.e., $L_g<1$, then it admits a unique fixed point, and the sequence $x^{(k+1)}=g(x^{(k)})$ converges to that fixed point for any initial $x^{(0)}\in X$.
\end{lemma}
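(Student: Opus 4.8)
The plan is to reproduce the classical argument behind the Banach fixed-point theorem: build the Picard iterates, show they form a Cauchy sequence via a geometric-series estimate, use completeness to extract a limit, identify that limit as a fixed point by continuity of $g$, and then dispatch uniqueness directly from the contraction inequality.

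First I would pick an arbitrary $x^{(0)}\in X$, which is possible since $X$ is non-empty, and define the iterates $x^{(k+1)}\coloneqq g(x^{(k)})$; these are well-defined and remain in $X$ because $g$ maps $X$ into itself. Setting $q\coloneqq L_g<1$, the contraction property gives $\|x^{(k+1)}-x^{(k)}\|=\|g(x^{(k)})-g(x^{(k-1)})\|\le q\,\|x^{(k)}-x^{(k-1)}\|$ for $k\ge 1$, hence by induction $\|x^{(k+1)}-x^{(k)}\|\le q^{k}\,\|x^{(1)}-x^{(0)}\|$.

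Next, for any $m>n$ the triangle inequality together with the geometric sum yields $\|x^{(m)}-x^{(n)}\|\le\sum_{k=n}^{m-1}\|x^{(k+1)}-x^{(k)}\|\le\big(\sum_{k=n}^{\infty}q^{k}\big)\|x^{(1)}-x^{(0)}\|=\frac{q^{n}}{1-q}\,\|x^{(1)}-x^{(0)}\|$, which goes to $0$ as $n\to\infty$ because $q<1$. Thus $(x^{(k)})$ is Cauchy, and by completeness of $X$ it converges to some $x^{*}\in X$. Since a contraction is Lipschitz, hence continuous, passing to the limit in $x^{(k+1)}=g(x^{(k)})$ gives $x^{*}=g(x^{*})$, so $x^{*}$ is a fixed point; this simultaneously establishes the claimed convergence of the iteration from any starting point.

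It then remains to prove uniqueness. If $x^{*}$ and $y^{*}$ both satisfy $z=g(z)$, then $\|x^{*}-y^{*}\|=\|g(x^{*})-g(y^{*})\|\le q\,\|x^{*}-y^{*}\|$, so $(1-q)\|x^{*}-y^{*}\|\le 0$, which forces $\|x^{*}-y^{*}\|=0$ since $q<1$, i.e. $x^{*}=y^{*}$. The only step requiring care is the Cauchy estimate, and it is precisely the hypothesis $q=L_g<1$ that makes the geometric tail $q^{n}/(1-q)$ vanish; completeness is then exactly what converts this bound into convergence of the iterates, so no further structure on $X$ is needed.
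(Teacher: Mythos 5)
Your proof is correct and is the standard classical argument for the Banach fixed-point theorem (Picard iterates, geometric-series Cauchy estimate, completeness, continuity, uniqueness from the contraction inequality). The paper itself does not prove this lemma --- it simply recalls it as a known result with a citation to Banach's original work --- so there is nothing to compare against beyond confirming that your reconstruction matches the textbook proof.
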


The following result is a direct application of Banach fixed-point theorem to the function $\mathcal{F}_x$ defined by \eqref{eq:FPoperator}.

\begin{theorem}\label{thm:concretization:nonlinear}
Let $f,\hat{f}:\X\times\U\rightarrow\R^{n_x}$ and $\pi:\X\times\E\rightarrow\U$ be three functions, and assume that $\E$ satisfies \eqref{eq:errorInclusion}. If the set $\U$ is non-empty and complete, and if for a given $x\in\X$, the function $\mathcal{F}_x$ defined by \eqref{eq:FPoperator} is a contraction, i.e., $L_{\mathcal{F}_x}<1$, then Problem~\ref{prob:concretization} has a unique solution and the sequence
$$
u^{(k+1)}=\mathcal{F}_x(u^{(k)})
$$
converges to that solution for any initial $u^{(0)}\in\U$.
\end{theorem}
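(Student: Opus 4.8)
The plan is to reduce the statement to the Banach fixed-point theorem (Lemma~\ref{thm:banach}) by invoking the equivalence, already recorded in Section~\ref{sec:IPconcretization}, between solutions of Problem~\ref{prob:concretization} and fixed points of $\mathcal{F}_x$. Concretely, for the given $x\in\X$, an input $u\in\U$ solves Problem~\ref{prob:concretization} if and only if $u=\pi(x,f(x,u)-\hat f(x,u))=\mathcal{F}_x(u)$, i.e., $u$ is a fixed point of $\mathcal{F}_x:\U\rightarrow\U$. Hence it suffices to analyze the fixed points of $\mathcal{F}_x$.

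Next I would verify the hypotheses of Lemma~\ref{thm:banach} with $g=\mathcal{F}_x$ and $X=\U$. The set $\U$ is non-empty and complete by assumption. The map $\mathcal{F}_x$ is a well-defined self-map of $\U$: for every $u\in\U$ one has $f(x,u)-\hat f(x,u)=e(x,u)\in\E$ by \eqref{eq:errorInclusion}, so $\mathcal{F}_x(u)=\pi(x,e(x,u))\in\U$ since $\pi$ takes values in $\U$. Finally, $\mathcal{F}_x$ is a contraction because $L_{\mathcal{F}_x}<1$ is precisely the standing assumption, so no Lipschitz estimate has to be carried out. Lemma~\ref{thm:banach} then provides a unique fixed point $u^\star\in\U$ of $\mathcal{F}_x$ and guarantees that the Picard iteration $u^{(k+1)}=\mathcal{F}_x(u^{(k)})$ converges to $u^\star$ from any $u^{(0)}\in\U$. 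Translating back through the equivalence of the first step, $u^\star$ is the unique solution of Problem~\ref{prob:concretization}, and the iteration $u^{(k+1)}=\mathcal{F}_x(u^{(k)})$ converges to it, which is exactly the claim.

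Since every hypothesis of Lemma~\ref{thm:banach} is either assumed directly or follows immediately from \eqref{eq:errorInclusion} and the codomain of $\pi$, there is essentially no obstacle in the proof itself; the only point worth a line of justification is the well-posedness of $\mathcal{F}_x$ as a self-map of $\U$. The genuinely substantive question, which the statement deliberately leaves aside, is to exhibit verifiable conditions on $f$, $\hat f$ and $\pi$ ensuring $L_{\mathcal{F}_x}<1$ — for instance a bound of the form $L_\pi\,(L_f+L_{\hat f})<1$, with $L_\pi$ the Lipschitz constant of $\pi(x,\cdot)$ in its second argument, obtained by composing Lipschitz constants — but such sufficient conditions are not needed to establish the theorem as stated. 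One may also note in passing that a contraction is in particular continuous, so existence alone would already follow from Theorem~\ref{thm:existence} under the additional compactness and convexity of $\U$; the added value here is uniqueness together with a constructive iterative scheme.
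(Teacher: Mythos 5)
Your proof is correct and takes exactly the route the paper intends: the paper omits a formal proof and simply remarks that the theorem ``is a direct application of Banach fixed-point theorem to the function $\mathcal{F}_x$,'' and you supply precisely that application, including the one detail worth spelling out (that $\mathcal{F}_x$ is a well-defined self-map of $\U$ via \eqref{eq:errorInclusion} and the codomain of $\pi$).
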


From \eqref{eq:FPoperator}, two sufficient conditions for the contractivity of $\mathcal{F}_x$ are
\begin{equation}\label{eq:lipschitz}
L_{\pi(x,\cdot)}<\dfrac{1}{L_{(f-\hat{f})(x,\cdot)}} \text{ or } L_{\pi(x,\cdot)}<\dfrac{1}{L_{f(x,\cdot)}+L_{\hat{f}(x,\cdot)}}.
\end{equation}
The second condition is more conservative, but the Lipschitz constants involved may be easier to obtain. These conditions can be interpreted as small gain constraints on the policy. Finally, note that as soon as the denominators are finite, a policy $\pi(x,e)$ that is constant with respect to its second argument satisfies the contraction requirement. It follows that these conditions are not more conservative than considering uninformed policies that do not depend on the error.

\begin{remark}[Memoryful policies]\label{rem:memory}
The content of Section~\ref{sec:IP} generalizes to policies with a state and error memory, i.e.,
$$
u_t=\pi_t(x_{0:t};\bar{e}_{0:t}).
$$
Indeed, at time $t$, the states $(x_\tau)_{\tau=0}^t$ and the errors $(\bar{e}_\tau)_{\tau=0}^{t-1}$ are known since $\bar{e}_\tau=f(x_\tau,u_\tau)-\hat{f}(x_\tau,u_\tau)$. Consequently, the concretization problem corresponding to Problem~\ref{prob:concretization} is: Find $u_t\in\U$ such that
$$
u_t=\pi_t(x_{0:t};\bar{e}_{0:t-1},f(x_t,u_t)-\hat{f}(x_t,u_t)),
$$
which is a fixed-point equation.
\end{remark}

\begin{remark}[Nondeterministic dynamics]
The content of Section~\ref{sec:IP} generalizes to nonlinear systems with additive disturbances, i.e., $f:\X\times\U\times\mathcal{W}\rightarrow\R^{n_x}$:
$$
f(x,u,w)= f_{\x\u}(x,u)+w,
$$
where $\mathcal{W}\subseteq\R^{n_x}$ is a set of unknown disturbances. To this end, let us consider an approximation that includes additive disturbance as well, i.e., $\hat{f}:\X\times\U\times\mathcal{W}\rightarrow\R^{n_x}$ such that
$$
\hat{f}(x,u,w)= \hat{f}_{\x\u}(x,u)+w.
$$
It leads to an error function
$$
e(x,u)\coloneqq f(x,u,w)-\hat{f}(x,u,w) = f_{\x\u}(x,u) - \hat{f}_{\x\u}(x,u),
$$
which is independent of the disturbance $w$. Therefore, an informed policy $\pi:\X\times\E\rightarrow\U$ (which depends on $\bar{e}$, but does not depend on $w$) can be concretized by solving Problem~\ref{prob:concretization}. This leads to the over-approximation relation:
$$
f(x,u,\mathcal{W})\subseteq \{ \hat{f}(x,\pi(x,\bar{e}),w)+\bar{e} \mid \bar{e}\in\E, w\in\mathcal{W} \}.
$$
\end{remark}

Section~\ref{sec:IP} contains our main contributions. For self-containment, we briefly review in Section~\ref{sec:computingOA} one method to compute over-approximations, and in Section~\ref{sec:synthesizingIP} one method to synthesize informed policies. Section~\ref{sec:IP} is independent of the specific methods reviewed below.

\section{COMPUTING LINEAR OVER-APPROXIMATIONS} \label{sec:computingOA}
We consider a linear over-approximation
$$
\hat{f}(x,u)=Ax+Bu,
$$
with error set $\E=\bigtimes_{i=1}^{n_x} [\underline{\epsilon}_i,\bar{\epsilon}_i]\subseteq\R^{n_x}$ that is an axis-aligned box. Finding the tightest $\E$ such that \eqref{eq:errorInclusion} holds corresponds to solving
\begin{align}
\min_{A,B,\underline{\epsilon},\bar{\epsilon}}\ \sum_i\bar{\epsilon}_i-\underline{\epsilon}_i \text{ such that}& \label{eq:opti:over-approx} \\
\text{for all } (x,u)\in\X\times\U:\ \underline{\epsilon}\leq& f(x,u)-Ax-Bu\leq\bar{\epsilon}.\notag
\end{align}
This problem is infinite-dimensional because the set $\X\times\U$ contains infinitely many elements. However, different techniques exist to find feasible solutions of small cost. If the dynamics is polynomial and the sets $\X$ and $\U$ are semi-algebraic, sum-of-squares optimization can be used (see \cite[Section~4.1]{aspeel2024simulation}). In the general case, a feasible solution can be found if (an upper bound on) the Lipschitz constant of $f$ is known \cite{malherbe2017global}, \cite[Section~IV-A]{balim2023koopman}. Indeed, the constraint in \eqref{eq:opti:over-approx} can be enforced on a finite number of samples $\mathcal{D}=\{(x^j,u^j)\}_{j\in J}\subseteq\X\times\U$ instead of for all $(x,u)\in\X\times\U$, leading to a finite-dimensional linear program. The corresponding set $\E$ can then be enlarged to guarantee that the constraint holds everywhere on $\X\times\U$ and not only on $\mathcal{D}$. To this end, consider the dispersion of the dataset $\mathcal{D}$ in $\X\times\U$ \cite[Section~5.2.3]{lavalle2006planning}:
$$
\delta\coloneqq \sup_{(x,u)\in\X\times\U}\min_{(x^j,u^j)\in\mathcal{D}} \|(x,u)-(x^j,u^j)\|_\infty.
$$
It corresponds to how far a point in $\X\times\U$ can be from a point in $\mathcal{D}$. Then, the set $\E$ is given by
$$
\underline{\epsilon}_i= \underline{\epsilon}_i^\mathcal{D}-L_{e_i}\delta \text{ and } \bar{\epsilon}_i = \bar{\epsilon}_i^\mathcal{D}+L_{e_i}\delta,
$$
where $\underline{\epsilon}^\mathcal{D}$ and $\bar{\epsilon}^\mathcal{D}$ are the optimal solution of the finite-dimensional linear program, and $L_{e_i}$ is the Lipschitz constant of the $i$-th component $e_i$ of the error function defined in \eqref{eq:error}, with respect to the infinity norm. If hard to compute, this Lipschitz constant can be over-approximated by $L_{e_i}\leq L_{f_i}+\lVert \begin{bmatrix} A & B \end{bmatrix}_{i,:} \rVert_{\infty\rightarrow\infty}$, where $\|\cdot\|_{\infty\rightarrow\infty}$ denotes the induced infinity norm.

\section{SYNTHESIZING INFORMED POLICIES} \label{sec:synthesizingIP}

In this section, we show how synthesizing an informed policy reduces to the problem of synthesizing an (uninformed) state-feedback policy for an augmented system. This augmentation method was proposed in~\cite{liu2021value, liu2024quantifying} in the context of control with preview information. For the over-approximation $x_{t+1}=F(x_t,u_t,\bar{e}_t)$, let us consider the augmented dynamics
\begin{align} \label{eq:augmentedSystem}
\begin{bmatrix}x_{t+1}\\ \bar{e}_{t+1}\end{bmatrix} =\begin{bmatrix}F(x_t,u_t,\bar{e}_t)\\ \tilde{e}_t\end{bmatrix},
\end{align}
whose state is $z_t=(x_t,\bar{e}_t)\in\R^{2n_x}$, input is $u_t$ and perturbation is $\tilde{e}_t\coloneqq \bar{e}_{t+1}\in \E$. Any state-feedback policy $u_t=\pi(z_t)$ for system \eqref{eq:augmentedSystem} directly gives an informed policy for $F$. Note that if $F$ is linear or piece-wise linear, then also is the augmented system~\eqref{eq:augmentedSystem}.

\subsection{System level synthesis}

The System Level Synthesis (SLS) framework parameterizes all linear causal controllers through their closed-loop mappings from disturbances to states and inputs \cite{anderson2019system}. This representation renders common performance and robustness constraints convex in the corresponding system responses, allowing one to synthesize a policy via a convex optimization program.
In this section, we show how SLS can be applied to the augmented system \eqref{eq:augmentedSystem} to synthesize an informed policy. To this end, consider an augmented linear over-approximation:
$$
\begin{bmatrix}
x_{t+1}\\ \bar{e}_{t+1}
\end{bmatrix} =\begin{bmatrix}A & I \\0 & 0\end{bmatrix}\begin{bmatrix}
x_{t}\\ \bar{e}_{t}
\end{bmatrix} +
\begin{bmatrix}
B \\ 0
\end{bmatrix} u_t +
\begin{bmatrix}
0\\I
\end{bmatrix}\tilde{e}_t,
$$
which can be written
\begin{equation}\label{eq:sls:augmentedSystem}
z_{t+1}=\tilde{A}z_t+\tilde{B}u_t+\tilde{D}\tilde{e}_t.
\end{equation}
We consider a linear policy with a memory:
\begin{equation}\label{eq:linearPolicyMemory}
u_t = \sum_{\tau=0}^t K_{(t,\tau)}^\x x_\tau + K_{(t,\tau)}^\e \bar{e}_\tau.
\end{equation}
Writing $K^\z_{(t,\tau)}=\begin{bmatrix}K_{(t,\tau)}^\x & K_{(t,\tau)}^\e\end{bmatrix}$, the control law can be written $u_t=\sum_{\tau\leq t}K^\z_{(t,\tau)}z_\tau$. Let us remind that informed policies with a memory can be concretized according to Remark~\ref{rem:memory}.

Let us introduce the vectors $\tilde{\mathbf{e}}=( z_0, \tilde{e}_{0:T-1})$, $\mathbf{z}=z_{0:T}$, $\mathbf{u}=u_{0:T}$ and the matrices $\mathcal{A}\coloneqq I_{T+1}\otimes \tilde{A}$, $\mathcal{B}\coloneqq I_{T+1}\otimes \tilde{B}$, and $\mathcal{D}\coloneqq \text{blkdiag}(I_{2n_x}, I_{T}\otimes \tilde{D})$, where $\otimes$ and $\text{blkdiag}$ indicate a Kronecker product, and a block-diagonal concatenation, respectively. In addition, let $\mathcal{Z}\in\R^{(T+1)2n_x\times(T+1)2n_x}$ be the block-downshift operator, i.e., the $(2n_x,2n_x)$-block-lower-triangular matrix with identities on its first block subdiagonal and zeros elsewhere. Finally, consider the following matrix:
\begin{align*}
K^\z=\begin{bmatrix}
K_{(0,0)}^\z & & & \\
K_{(1,0)}^\z & K_{(1,1)}^\z & & \\
\vdots & & \ddots & \\
K_{(T,0)}^\z & K_{(T,1)}^\z & \cdots & K_{(T,T)}^\z
\end{bmatrix}.
\end{align*}
We use a similar notation to denote the blocks of any block-lower-triangular matrix. Specifically, for a block-lower-triangular matrix $M \in \mathbb{R}^{(T+1)m \times (T+1)n}$ with block size $(m,n)$, we denote by
\begin{equation*}
M_{(t,\tau)} \coloneqq M_{tm+1:(t+1)m,\;\tau n+1:(\tau+1)n}
\end{equation*}
its $(t,\tau)$-block, for all $t,\tau \in \{0,\dots,T\}$.

System level synthesis parameterizes the controller matrix $K^\z$ through the following system responses $\Phi^\z$ and $\Phi^\u$:
\begin{equation}\label{eq:sls:response}
\mathbf{z}=\Phi^\z\mathcal{D}\tilde{\mathbf{e}}, \quad \mathbf{u}=\Phi^\u\mathcal{D}\tilde{\mathbf{e}}.
\end{equation}
The following theorem is the core result of finite-horizon SLS. It provides conditions under which all linear causal controllers can be parameterized by their system responses $\Phi^\z$ and $\Phi^\u$.

\begin{theorem}[{\cite[Theorem~2.1]{anderson2019system}}]
There is a $K^\z$ for which system \eqref{eq:sls:augmentedSystem} gives the system response \eqref{eq:sls:response} if and only if the matrices $\Phi^\z$ and $\Phi^\u$ are $(2n_x,2n_x)$- and $(n_u,2n_x)$-block-lower-triangular, respectively, and  
\begin{equation}\label{eq:sls:slc}
\begin{bmatrix}
I-\mathcal{Z}\mathcal{A} & -\mathcal{ZB}
\end{bmatrix}
\begin{bmatrix}
\Phi^\z\\ \Phi^\u
\end{bmatrix}=I.
\end{equation}
In addition, $K^\z=\Phi^\u(\Phi^\z)^{-1}$ achieves the system response \eqref{eq:sls:response}.
\end{theorem}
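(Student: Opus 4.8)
The plan is to reproduce, in our augmented coordinates, the standard finite-horizon SLS argument of \cite{anderson2019system}. The first step is to put the closed loop in stacked form over the horizon. Stacking the dynamics \eqref{eq:sls:augmentedSystem} for $t=0,\dots,T-1$ and using that $\mathcal{Z}$ implements the block downshift yields $\mathbf{z}=\mathcal{Z}\mathcal{A}\mathbf{z}+\mathcal{Z}\mathcal{B}\mathbf{u}+\mathcal{D}\tilde{\mathbf{e}}$, that is, $(I-\mathcal{Z}\mathcal{A})\mathbf{z}-\mathcal{Z}\mathcal{B}\mathbf{u}=\mathcal{D}\tilde{\mathbf{e}}$, where the first block of $\mathcal{D}\tilde{\mathbf{e}}$ encodes the initial condition $z_0$ and the remaining blocks encode $\tilde{D}\tilde{e}_t$. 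Likewise, the causal control law \eqref{eq:linearPolicyMemory} stacks into $\mathbf{u}=K^\z\mathbf{z}$ with $K^\z$ block-lower-triangular. The structural fact used throughout is that $\mathcal{Z}\mathcal{A}$ and $\mathcal{Z}\mathcal{B}$ are \emph{strictly} block-lower-triangular, hence nilpotent, so that $I-\mathcal{Z}\mathcal{A}-\mathcal{Z}\mathcal{B}K^\z$ is invertible with block-lower-triangular inverse for every block-lower-triangular $K^\z$.

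For the ``only if'' direction, suppose a $K^\z$ achieves the response \eqref{eq:sls:response}. Eliminating $\mathbf{u}=K^\z\mathbf{z}$ from the stacked dynamics gives $(I-\mathcal{Z}\mathcal{A}-\mathcal{Z}\mathcal{B}K^\z)\mathbf{z}=\mathcal{D}\tilde{\mathbf{e}}$, and since this must hold for all $\tilde{\mathbf{e}}$ one reads off $\Phi^\z=(I-\mathcal{Z}\mathcal{A}-\mathcal{Z}\mathcal{B}K^\z)^{-1}$ and $\Phi^\u=K^\z\Phi^\z$. Both are block-lower-triangular, being products and inverses of such matrices, and substituting $\Phi^\u=K^\z\Phi^\z$ into the left-hand side of \eqref{eq:sls:slc} collapses it to $(I-\mathcal{Z}\mathcal{A}-\mathcal{Z}\mathcal{B}K^\z)\Phi^\z=I$, so the affine constraint holds.

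For the ``if'' direction, suppose $\Phi^\z,\Phi^\u$ are block-lower-triangular and satisfy \eqref{eq:sls:slc}. I would first show that $\Phi^\z$ is invertible: reading the $(t,t)$-diagonal blocks of \eqref{eq:sls:slc} and using that $\mathcal{Z}\mathcal{A}$ and $\mathcal{Z}\mathcal{B}$ vanish on the block diagonal forces every diagonal block of $\Phi^\z$ to equal $I$, so $\Phi^\z$ is block-lower-triangular with identity diagonal, hence invertible with block-lower-triangular inverse. Set $K^\z\coloneqq\Phi^\u(\Phi^\z)^{-1}$, which is again block-lower-triangular, so the resulting controller is causal; right-multiplying \eqref{eq:sls:slc} by $(\Phi^\z)^{-1}$ gives $(\Phi^\z)^{-1}=I-\mathcal{Z}\mathcal{A}-\mathcal{Z}\mathcal{B}K^\z$. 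Closing the loop of the stacked dynamics with $\mathbf{u}=K^\z\mathbf{z}$ then yields $(I-\mathcal{Z}\mathcal{A}-\mathcal{Z}\mathcal{B}K^\z)\mathbf{z}=\mathcal{D}\tilde{\mathbf{e}}$, hence $\mathbf{z}=\Phi^\z\mathcal{D}\tilde{\mathbf{e}}$ and $\mathbf{u}=K^\z\Phi^\z\mathcal{D}\tilde{\mathbf{e}}=\Phi^\u\mathcal{D}\tilde{\mathbf{e}}$, which is exactly \eqref{eq:sls:response}.

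I expect the difficulty here to be bookkeeping rather than depth: carefully checking that the stacked dynamics with the specific $\mathcal{D}$ and $\tilde{\mathbf{e}}=(z_0,\tilde{e}_{0:T-1})$ faithfully reproduces \eqref{eq:sls:augmentedSystem} with the correct initialization, and that block-lower-triangularity (equivalently, causality) is preserved through every inversion and product --- which is precisely where the nilpotency of $\mathcal{Z}\mathcal{A}+\mathcal{Z}\mathcal{B}K^\z$ is essential. Since the statement is quoted verbatim from \cite[Theorem~2.1]{anderson2019system}, an equally valid route is simply to observe that \eqref{eq:sls:augmentedSystem} is a linear time-varying system in the standard SLS format and invoke that theorem directly.
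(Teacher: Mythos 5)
The paper does not prove this statement: it is imported verbatim from \cite[Theorem~2.1]{anderson2019system} and used as a black box, so there is no in-paper proof to compare against. Your reconstruction is nonetheless correct and is exactly the standard finite-horizon SLS argument in the cited reference: the stacked form $(I-\mathcal{Z}\mathcal{A})\mathbf{z}-\mathcal{Z}\mathcal{B}\mathbf{u}=\mathcal{D}\tilde{\mathbf{e}}$ is right (including the block at $t=0$ encoding $z_0$ via the $I_{2n_x}$ block of $\mathcal{D}$), strict block-lower-triangularity of $\mathcal{Z}\mathcal{A}$ and $\mathcal{Z}\mathcal{B}K^\z$ gives the nilpotency/invertibility you need, the ``only if'' direction reads off $\Phi^\z=(I-\mathcal{Z}\mathcal{A}-\mathcal{Z}\mathcal{B}K^\z)^{-1}$ and $\Phi^\u=K^\z\Phi^\z$, and the ``if'' direction correctly uses the diagonal blocks of \eqref{eq:sls:slc} to deduce $\Phi^\z_{(t,t)}=I$ before setting $K^\z=\Phi^\u(\Phi^\z)^{-1}$. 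Your closing observation --- that one can simply invoke the cited theorem on the augmented LTV system \eqref{eq:sls:augmentedSystem} rather than re-deriving it --- is precisely what the paper does.
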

Note that the constraint \eqref{eq:sls:slc} is affine in $\Phi^\z$ and $\Phi^\u$.

\subsection{Contractivity condition}

When the true dynamics $f$ is nonlinear and not input-affine, concretizing an informed policy relies on the iterative procedure described in Theorem~\ref{thm:concretization:nonlinear}. To ensure convergence of this iteration, the policy must satisfy the contraction condition \eqref{eq:lipschitz}. When the policy has the form \eqref{eq:linearPolicyMemory}, condition \eqref{eq:lipschitz} simplifies to the small-gain constraint $\|K^\e_{(t,t)}\|< \gamma$ for all $t$, where $\gamma\geq0$ is a constant determined by the Lipschitz constants in the right-hand sides of \eqref{eq:lipschitz}. The next theorem shows how this small-gain constraint can be enforced convexly in terms of the system-response variable $\Phi^\u$ provided by the SLS formulation.
\begin{theorem}\label{thm:sls:contractivity}
If $\Phi^\z$ and $\Phi^\u$ satisfy \eqref{eq:sls:slc}, and $\begin{bmatrix}K^\x & K^\e \end{bmatrix}=\Phi^\u(\Phi^\z)^{-1}$, then $K^\e_{(t,t)}=(\Phi^\u_{(t,t)})_{:,n_x+1:2n_x}$. In addition, for any $\gamma\geq0: \|K^\e_{(t,t)}\|< \gamma$ if and only if $\| (\Phi^\u_{(t,t)})_{:,n_x+1:2n_x} \|< \gamma$.
\end{theorem}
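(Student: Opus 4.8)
The plan is to show that, under the SLS linear constraint \eqref{eq:sls:slc}, the matrix $\Phi^{\z}$ has identity blocks on its diagonal; this makes $(\Phi^{\z})^{-1}$ block-lower-triangular with identity diagonal blocks, so that the diagonal blocks of $K^{\z}=\begin{bmatrix}K^{\x} & K^{\e}\end{bmatrix}=\Phi^{\u}(\Phi^{\z})^{-1}$ coincide with those of $\Phi^{\u}$. Both assertions then follow at once. Throughout, $\Phi^{\z}$ and $\Phi^{\u}$ are taken block-lower-triangular of block sizes $(2n_x,2n_x)$ and $(n_u,2n_x)$, as in the SLS parameterization.

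First I would rewrite \eqref{eq:sls:slc} as $\Phi^{\z}=I+\mathcal{Z}\mathcal{A}\Phi^{\z}+\mathcal{Z}\mathcal{B}\Phi^{\u}$. Since $\mathcal{A}=I_{T+1}\otimes\tilde{A}$ and $\mathcal{B}=I_{T+1}\otimes\tilde{B}$ are block-diagonal and $\mathcal{Z}$ is the block-downshift operator, the products $\mathcal{Z}\mathcal{A}$ and $\mathcal{Z}\mathcal{B}$ are strictly block-lower-triangular: their only nonzero blocks lie on the first block sub-diagonal. Multiplying a strictly block-lower-triangular matrix on the right by a block-lower-triangular one again yields a strictly block-lower-triangular matrix; applying this to $\mathcal{Z}\mathcal{A}\Phi^{\z}$ and $\mathcal{Z}\mathcal{B}\Phi^{\u}$ shows their $(t,t)$-blocks vanish. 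Reading off the $(t,t)$-block of the rewritten identity then gives $\Phi^{\z}_{(t,t)}=I_{2n_x}$ for every $t$.

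Next, a block-lower-triangular matrix with invertible diagonal blocks is invertible, with block-lower-triangular inverse whose diagonal blocks are the inverses of the original ones; hence $(\Phi^{\z})^{-1}$ is block-lower-triangular and $\big((\Phi^{\z})^{-1}\big)_{(t,t)}=I_{2n_x}$. In the block expansion $K^{\z}_{(t,t)}=\sum_{\tau}\Phi^{\u}_{(t,\tau)}\,\big((\Phi^{\z})^{-1}\big)_{(\tau,t)}$, lower-triangularity of $\Phi^{\u}$ forces $\tau\le t$ while lower-triangularity of $(\Phi^{\z})^{-1}$ forces $\tau\ge t$, so only $\tau=t$ survives and $K^{\z}_{(t,t)}=\Phi^{\u}_{(t,t)}\,I_{2n_x}=\Phi^{\u}_{(t,t)}$. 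Splitting the $2n_x$ columns into the first $n_x$ (the $x$-part) and the last $n_x$ (the $\bar{e}$-part) in $K^{\z}_{(t,t)}=\begin{bmatrix}K^{\x}_{(t,t)} & K^{\e}_{(t,t)}\end{bmatrix}$ yields $K^{\e}_{(t,t)}=(\Phi^{\u}_{(t,t)})_{:,n_x+1:2n_x}$. The second claim is then immediate, since these two matrices are equal: $\|K^{\e}_{(t,t)}\|<\gamma$ holds precisely when $\|(\Phi^{\u}_{(t,t)})_{:,n_x+1:2n_x}\|<\gamma$.

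The one point requiring care is the mixed block sizes: $\mathcal{B}$ and $\Phi^{\u}$ have $n_u$-sized row-blocks but $2n_x$-sized column-blocks, so "strictly block-lower-triangular" must be read in terms of the block index pair $(t,\tau)$ rather than literal square shape, and the compatibility of the Kronecker and block-downshift products should be checked. Past that bookkeeping the argument is elementary, using only facts about products and inverses of block-triangular matrices, with no appeal to the fixed-point results of the earlier sections.
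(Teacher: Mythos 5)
Your proof is correct and follows essentially the same route as the paper's: rewrite \eqref{eq:sls:slc} as $\Phi^{\z}=I+\mathcal{Z}(\mathcal{A}\Phi^{\z}+\mathcal{B}\Phi^{\u})$, use the strict block-lower-triangularity of $\mathcal{Z}$ to conclude that $\Phi^{\z}$ (and hence $(\Phi^{\z})^{-1}$) has identity diagonal blocks, then read off $K^{\z}_{(t,t)}=\Phi^{\u}_{(t,t)}$ and split columns. You simply make explicit a few steps (why the diagonal blocks of $\mathcal{Z}\mathcal{A}\Phi^{\z}$ and $\mathcal{Z}\mathcal{B}\Phi^{\u}$ vanish, and the bookkeeping for non-square blocks) that the paper leaves implicit.
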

\begin{proof}
Equation~\eqref{eq:sls:slc} can be written
$$
\Phi^\z=I+\mathcal{Z}(\mathcal{A}\Phi^\z+\mathcal{B}\Phi^\u).
$$
Since all the matrices involved are block-lower-triangular, and the matrix $\mathcal{Z}$ is strictly block-lower-triangular, it follows that $\Phi^\z$ is a block-lower-triangular matrix with identities on its diagonal. Consequently, its inverse $(\Phi^\z)^{-1}$ also is a block-lower-triangular matrix with identities on its diagonal. It follows that $K^\z_{(t,t)}=(\Phi^\u(\Phi^\z)^{-1})_{(t,t)}=\Phi^\u_{(t,t)}$. The rest of the proof follows directly.
\end{proof}

\section{EXPERIMENTS}

To illustrate our method, we consider a nonlinear system of the form \eqref{eq:nonlinearSystem} over a finite horizon $t=0,\dots,T$, with a known initial state. We synthesize a policy of the form~\eqref{eq:linearPolicyMemory} such that
\begin{equation}\label{eq:reachability}
(x_t,u_t)\in\X\times\U
\end{equation}
for each $t$. We want to find the largest $\alpha\in\R$ such that $x_{1,T}\geq \alpha$, i.e., the first component of the state is as large as possible at the end of the horizon. To this end, we compute a linear over-approximation using the method described in Section~\ref{sec:computingOA} with a uniform grid $\mathcal{D}$ that has a dispersion $\delta=0.03$ in $\X\times\U$. The examples consider a continuous-time system $f_\text{cont}$ discretized using forward Euler scheme with step $\kappa=0.1$, i.e.,
\begin{equation}\label{eq:Euler}
f(x,u)=x+\kappa f_\text{cont}(x,u).
\end{equation}
The Lipschitz constant of each component of the error is over-approximated using the Lipschitz constants of the continuous-time dynamics. From \eqref{eq:Euler} and $e(x,u)=f(x,u)-Ax-Bu$, we have
$$
L_{e_i}\leq \kappa L_{f_{\text{cont},i}} +\lVert\begin{bmatrix}A-I & B\end{bmatrix}_{i,:}\rVert_{\infty\rightarrow\infty}.
$$

Finally, we synthesize an informed policy using the method described in Section~\ref{sec:synthesizingIP}. In terms of the augmented system \eqref{eq:sls:augmentedSystem}, constraints \eqref{eq:reachability} and $x_{1,T}\geq \alpha$ are enforced for all $\bar{e}_0,\dots, \bar{e}_{T-1}\in \E$. When $\X_t$, $\U$, $\E$ and $\X_0$ are polyhedra, constraint \eqref{eq:reachability} can be handled linearly using system level synthesis and Farkas' lemma. See \cite[Section~III-B]{aspeel2023low} for details. For comparison, an uninformed policy is synthesized via system-level synthesis on the unaugmented linear over-approximation.

A \texttt{Julia} code that implements our method and generates the figures is available at {\tt\small\url{https://github.com/aaspeel/InformedOverApproximations}}.

\subsection{Input-affine dynamics}
In this first experiment, we consider the following dynamics
\begin{subequations}\label{eq:inputAffineExperiment}
\begin{align}
\dot{x}_1&=x_2+0.5 x_1^2 \\
\dot{x}_2&=0.75x_1^2+2x_2^3+\cos(x_2)u.
\end{align}
\end{subequations}
The state and input sets are $\X=[0,1]\times[-0.5,0.5]$ and $\U=[-1,1]$. The discrete time horizon is $T=30$ time steps. The Lipschitz constants used to compute the over-approximation are $L_{f_{\text{cont},1}}=2$ and $L_{f_{\text{cont},2}}=5$.

Since this dynamics is input-affine and the set $\U$ is a polytope, the informed policy is concretized by solving the linear program \eqref{eq:concretization:affine}.

The results are shown in Fig.~\ref{fig:inputAffineExperiment}, where the informed policy allows to guarantee that the terminal state will satisfy $x_{1,T}\geq \alpha_\text{informed}=0.81$, while an uninformed policy can only guarantee $x_{1,T}\geq \alpha_\text{uninformed}=0.62$.

\begin{figure}
\centering
\includegraphics[width=8cm]{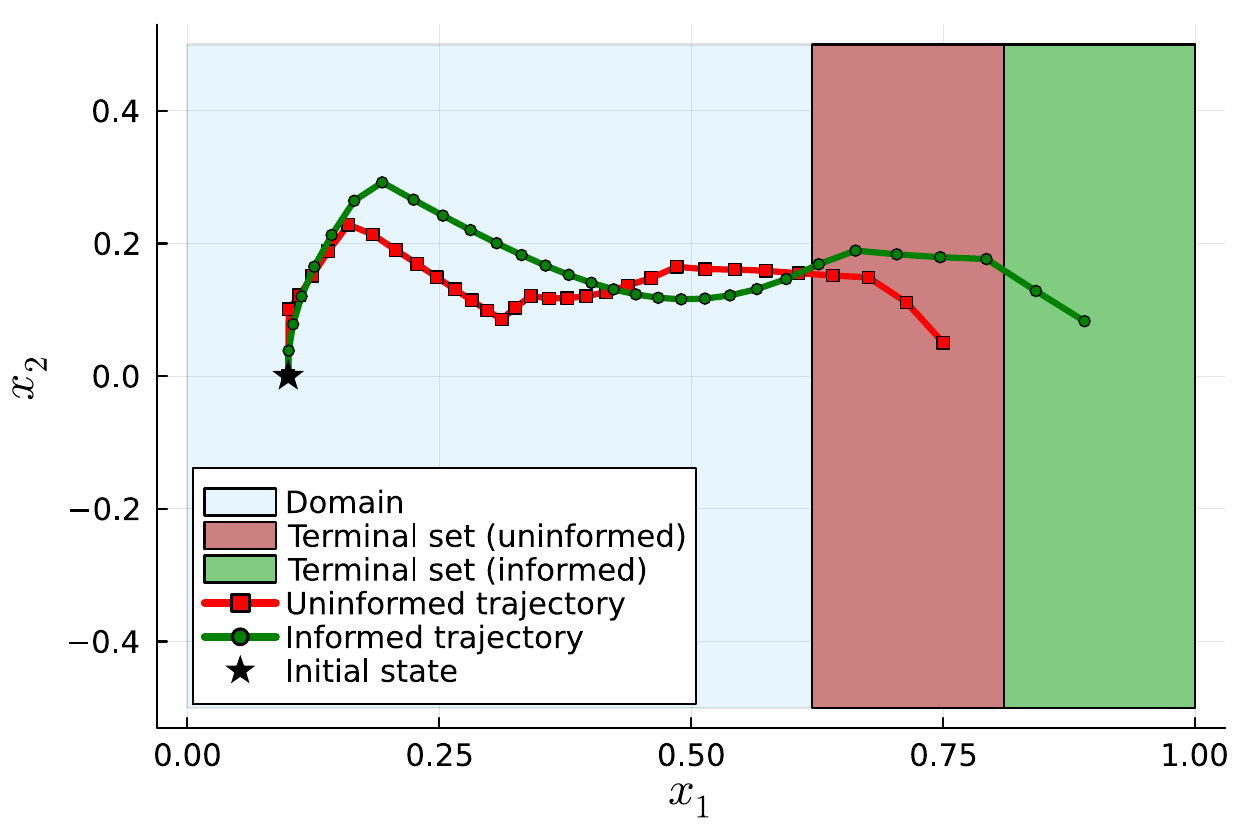}
\caption{\small Trajectories obtained with informed and uninformed policies for the dynamics \eqref{eq:inputAffineExperiment}. The informed policy guarantees to be further on the right at the end of the horizon.}
\label{fig:inputAffineExperiment}
\end{figure}

\subsection{Nonlinear dynamics}
As a second example, we consider the dynamics
\begin{subequations}\label{eq:nonlinearExperiment}
\begin{align}
\dot{x}_1&=x_2 \\
\dot{x}_2&=-\sin(x_1)+0.5x_2^2+2\sin(u).
\end{align}
\end{subequations}
The state and input sets are $\X=[-3.14, 3.14+3.14/12]\times[-2,2]$ and $\U=[-3.14/2,3.14/2]$, respectively. The discrete time horizon is $T=35$ time steps. The Lipschitz constants used to compute the over-approximation are $L_{f_{\text{cont},1}}=1$ and $L_{f_{\text{cont},2}}=5$.

Since the dynamics is not input-affine, the informed policy is concretized using the fixed-point iteration described in Theorem~\ref{thm:concretization:nonlinear}. The second contractivity condition in \eqref{eq:lipschitz} is enforced with $L_{f(x,\cdot)}=2$ and $L_{\hat{f}(x,\cdot)}=\|B\|_{\infty\rightarrow\infty}$. This constraint is incorporated into the system-level-synthesis program according to Theorem~\ref{thm:sls:contractivity}, which is a linear constraint. The strict inequality is replaced by a non-strict one with tolerance $10^{-5}$.

The results are reported in Fig.~\ref{fig:nonlinearExperiment}, where $\alpha_\text{informed}=3.1$ and $\alpha_\text{uninformed}=2.7$, showing the reduction in conservatism obtained from using an informed policy.

\begin{figure}
\centering
\includegraphics[width=8cm]{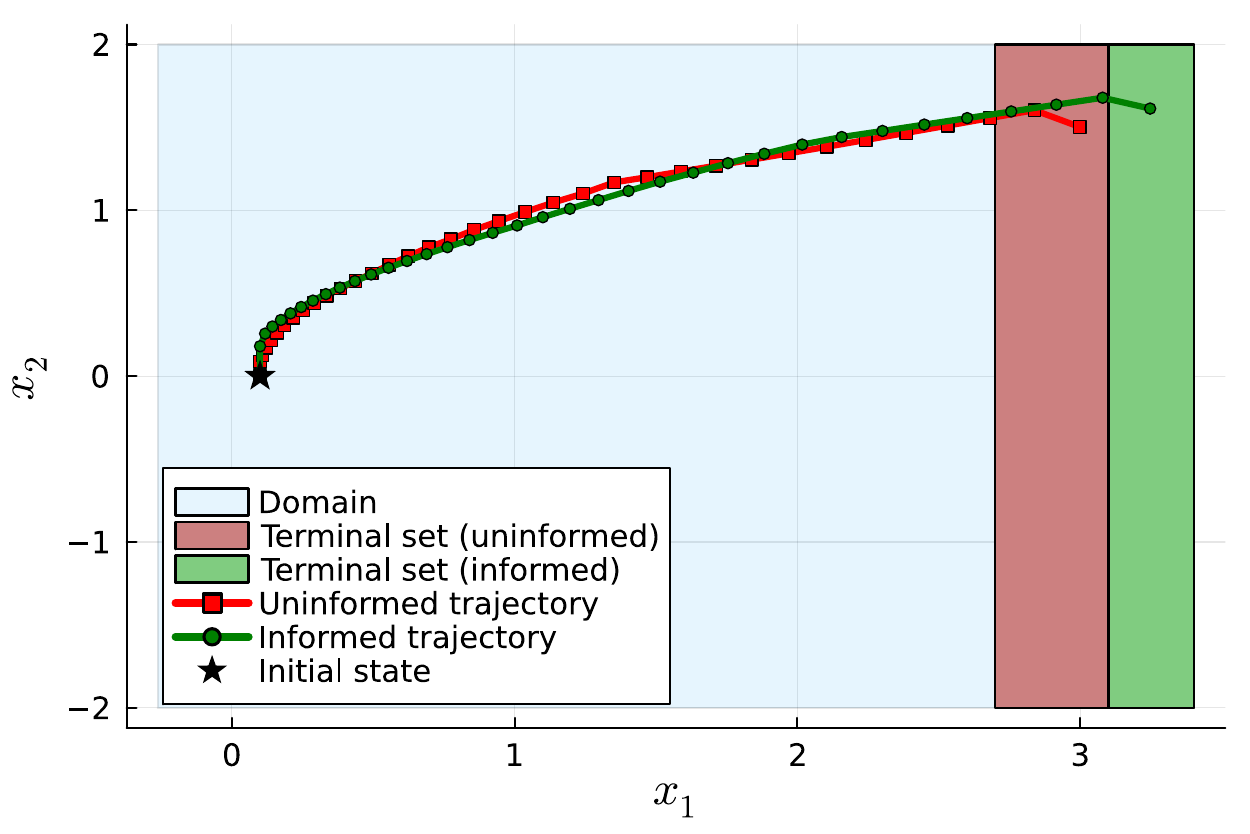}
\caption{\small Trajectories obtained with informed and uninformed policies for the dynamics \eqref{eq:nonlinearExperiment}. The informed policy guarantees to be further on the right at the end of the horizon.}
\label{fig:nonlinearExperiment}
\end{figure}

\section{CONCLUSION}
We proposed \emph{informed policies}, which exploit over-approximation errors as input-dependent preview information rather than treating them as adversarial disturbances. By formulating concretization as a fixed-point problem, we established general existence guarantees and provided efficient solution methods: closed-form or convex programs for input-affine systems, and a contraction-based iterative scheme for nonlinear systems. Numerical examples illustrate that informed policies significantly reduce conservatism compared to standard approaches.

Future work includes extending the framework to systems with disturbances, incorporating output feedback, and addressing mismatched dimensions between $f$ and $\hat{f}$, as well as multi-step preview information.

\bibliographystyle{alpha}
\bibliography{references}

\end{document}